\newtheorem{theorem}{Theorem}
\newtheorem{proposition}{Proposition}
\newcommand{\later}[1]{{}}
\newcommand{\old}[1]{{}}
\long\def\ignore#1{}
\newcommand{\EE}{\mathbb{E}} 
\newcommand{\RR}{\mathbb{R}} 
\def\I{{\mathcal I}}
\def\Q{{\mathcal Q}}
\def\eps{\varepsilon}
\def\rho{\varrho}
\newcommand{\vol}{{\rm Vol}}
\title{Approximate Euclidean Ramsey theorems}
\author{
Adrian Dumitrescu\thanks{Department of Computer Science,
University of Wisconsin--Milwaukee, Email:~\texttt{ad@cs.uwm.edu}.
Supported in part by NSF CAREER grant CCF-0444188.
}}
\begin{document}

\maketitle

\begin{abstract}
According to a classical result of Szemer\'{e}di,
every dense subset of $1,2,\ldots,N$ contains an 
arbitrary long arithmetic progression, if $N$ is large enough.
Its analogue in higher dimensions due to F\"urstenberg and Katznelson
says that every dense subset of $\{1,2,\ldots,N\}^d$ contains an
arbitrary large grid, if $N$ is large enough. 
Here we generalize these results for separated point sets    
on the line and respectively in the Euclidean space:
(i) every dense separated set of points in some interval
$[0,L]$ on the line contains an arbitrary long approximate arithmetic
progression, if $L$ is large enough. 
(ii) every dense separated set of points in the $d$-dimensional cube $[0,L]^d$
in $\RR^d$ contains an arbitrary large approximate grid, if $L$ is
large enough.  A further generalization for any finite pattern in $\RR^d$ 
is also established. The separation condition is shown to be necessary
for such results to hold. In the end we show that every sufficiently
large point set in $\RR^d$ contains an arbitrarily large subset of
almost collinear points. No separation condition is needed in this case. 
\end{abstract}

\noindent\textbf{Keywords}:
Euclidean Ramsey theory,
approximate arithmetic progression,
approximate homothetic copy,
almost collinear points.

\section{Introduction} \label{sec:intro}

Let us start by recalling the classical result of Ramsey from 1930:

\begin{theorem} {\rm(Ramsey \cite{R30}).} \label{T-R} 
Let $p \leq q$, and $r$ be positive integers. Then there exists a positive integer
$N=N(p,q,r)$ with the following property: If $X$ is a set with $N$
elements, for any $r$-coloring of the $p$-element subsets of $X$,
there exists a subset $Y$ of $X$ with at least $q$ elements such that 
all $p$-element subsets of $Y$ have the same color.
\end{theorem}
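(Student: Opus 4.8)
\medskip
\noindent\textbf{Proof idea.}
The plan is to induct on the subset size $p$. For the base case $p=1$ the statement is just the pigeonhole principle: if $|X|\ge r(q-1)+1$ and the one-element subsets are $r$-coloured, some colour class has at least $q$ elements. So I would assume $p\ge 2$ and that $N(p-1,q',r)$ exists for every $q'\ge p-1$, then fix $q\ge p$ and $r$ and write $\chi$ for the given colouring of the $p$-element subsets of $X$.

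The core of the argument is a greedy extraction of a long ``homogeneous-above-the-minimum'' sequence. Set $X_0:=X$. Having built $X_{i-1}$, pick any $a_i\in X_{i-1}$, colour each $(p-1)$-element subset $S$ of $X_{i-1}\setminus\{a_i\}$ by $\chi(S\cup\{a_i\})$, and invoke the induction hypothesis to find a subset $X_i\subseteq X_{i-1}\setminus\{a_i\}$ of a prescribed size $n_i$ all of whose $(p-1)$-subsets receive one colour $c_i\in\{1,\dots,r\}$. Iterating for $i=1,\dots,t$ yields $a_1,\dots,a_t$ and $c_1,\dots,c_t$; this is legitimate provided $|X_{i-1}|-1\ge N(p-1,n_i,r)$ at every step, so I would pin down the target sizes backwards by $n_t:=p$ and $n_{i-1}:=N(p-1,n_i,r)+1$, and declare $N:=n_0$. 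Throughout I would carry the invariant: the $X_i$ are nested, $X_i\subseteq X_{i-1}\setminus\{a_i\}$, and every $(p-1)$-subset of $X_i$ together with $a_i$ is $\chi$-coloured $c_i$.

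From this invariant I would extract the decisive fact that for $i_1<i_2<\dots<i_p$ the colour $\chi(\{a_{i_1},\dots,a_{i_p}\})$ equals $c_{i_1}$: since the sets are nested and each $a_j$ comes from $X_{j-1}$, all of $a_{i_2},\dots,a_{i_p}$ lie in $X_{i_1}$, so $\{a_{i_2},\dots,a_{i_p}\}$ is a $(p-1)$-subset of $X_{i_1}$, to which the invariant applies. Hence, given $j_1<\dots<j_q$, every $p$-subset of $\{a_{j_1},\dots,a_{j_q}\}$ is coloured $c_{j_k}$ where $a_{j_k}$ is its least element, and the least element always has index among the first $q-p+1$ of the $j$'s. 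It therefore suffices to choose $j_1<\dots<j_q$ with $c_{j_1}=\dots=c_{j_{q-p+1}}$. A pigeonhole on $c_1,\dots,c_{t-p+1}$ produces a colour repeated at least $q-p+1$ times as soon as $t\ge r(q-p)+p$; taking those $q-p+1$ indices and appending $t-p+2,\dots,t$ gives a set $Y$ of $q$ elements all of whose $p$-subsets have that one colour.

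The one place that needs genuine care — rather than routine counting — is the nested-set invariant, specifically the claim that $a_{i_2},\dots,a_{i_p}\in X_{i_1}$; this is the conceptual crux that makes ``the colour of a $p$-tuple depends only on its least element'' work, so I would state and verify it explicitly before using it. The remaining points (that the recursion $n_{i-1}=N(p-1,n_i,r)+1$ keeps every $n_i\ge p-1$, so each appeal to the induction hypothesis is to a valid instance, and the final pigeonhole arithmetic) I expect to be straightforward.
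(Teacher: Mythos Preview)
Your argument is correct: this is the standard ``focusing'' proof of the hypergraph Ramsey theorem, inducting on $p$ and using the case $p-1$ to build a nested chain $X_0\supseteq X_1\supseteq\cdots$ together with anchors $a_1,\dots,a_t$ so that the colour of any $p$-tuple $\{a_{i_1},\dots,a_{i_p}\}$ depends only on its smallest index, after which a pigeonhole on $c_1,\dots,c_{t-p+1}$ finishes the job. The invariant you single out (that $a_{i_2},\dots,a_{i_p}\in X_{i_1}$ because the $X_i$ are nested and $a_j\in X_{j-1}$) is indeed the crux, and your backward recursion $n_t=p$, $n_{i-1}=N(p-1,n_i,r)+1$ with $t=r(q-p)+p$ makes every appeal to the inductive hypothesis legitimate.

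There is, however, nothing to compare against: the paper does not prove Theorem~\ref{T-R}. It is stated as Ramsey's classical 1930 result, cited to~\cite{R30}, and then invoked as a black box in the proof of Theorem~\ref{T4}. So your write-up supplies a proof where the paper simply quotes the literature.
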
 

As noted in \cite{EG+73}, perhaps the first Ramsey type result of a
geometric nature is Van der Waerden's theorem on arithmetic progressions:

\begin{theorem} {\rm(Van der Waerden \cite{W27}).} \label{T-W} 
For every positive integers $k$ and $r$, there exists 
a positive integer $W=W(k,r)$ with the following property: 
For every $r$-coloring of the integers $1,2,\ldots,W$ 
there is a monochromatic arithmetic progression of $k$ terms.
\end{theorem}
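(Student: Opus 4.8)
The plan is to establish the theorem by van der Waerden's original double induction: an outer induction on the progression length $k$, and, inside the inductive step, an inner induction on an auxiliary counter. The base cases are immediate: $W(1,r)=1$, and $W(2,r)=r+1$ by the pigeonhole principle. So fix $k\ge 3$ and assume, as outer induction hypothesis, that $W(k-1,s)$ exists for every number of colours $s$. The engine of the proof is the notion of a \emph{color-focused} family: arithmetic progressions $P_1,\dots,P_j$, each of length $k-1$, each monochromatic, pairwise of distinct colours, and sharing a common \emph{focus} $f$, meaning that if $P_\ell=\{a_\ell,a_\ell+d_\ell,\dots,a_\ell+(k-2)d_\ell\}$ then $a_\ell+(k-1)d_\ell=f$ for every $\ell$. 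The one line that makes everything work: a color-focused family of size $r$ already forces a monochromatic $k$-term progression, because the focus $f$ must receive one of the $r$ colours, necessarily the colour of some $P_\ell$, and then $P_\ell\cup\{f\}$ is such a progression.

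Thus it suffices to prove, by induction on $j=1,\dots,r$, the following auxiliary statement: there is an integer $M(j)$ so that every $r$-colouring of $[1,M(j)]$ either contains a monochromatic $k$-term progression, or contains a color-focused family of size $j$ whose focus also lies in $[1,M(j)]$. For $j=1$, take $M(1)=2\,W(k-1,r)$: within $[1,W(k-1,r)]$ there is a monochromatic $(k-1)$-term progression, a (trivially) color-focused family of size one, and its focus is at most $2\,W(k-1,r)$. For the step from $j-1$ to $j$, set $L:=M(j-1)$, partition $[1,M(j)]$ into consecutive blocks of length $L$, and colour each block by its pattern of colours — one of $r^{L}$ possibilities. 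Taking $M(j):=2L\cdot W(k-1,r^{L})$, so that there are $2\,W(k-1,r^{L})$ blocks, the already-established case $k-1$ (with $r^{L}$ colours) applied to this block-colouring produces $k-1$ identically patterned blocks in arithmetic progression, say with index gap $D$, and leaves room for the ``focus block'' $k-1$ steps further along. Inside the first of these blocks (which has length $L=M(j-1)$) the inner hypothesis gives either a monochromatic $k$-term progression — and then we are done — or a color-focused family $P_1,\dots,P_{j-1}$ with focus $f$ lying in that block.

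Now ``diagonalize'' across the chosen blocks: replace the common difference $d_\ell$ of $P_\ell$ by $d_\ell+DL$. The $m$-th term of the new progression is $(a_\ell+m d_\ell)+mDL$, which is the image of the $m$-th term of $P_\ell$ translated into the $m$-th chosen block; since that block has the same colour pattern, the new progression is still monochromatic of the same colour, and its focus has moved to the common value $g:=f+(k-1)DL$, which lies in the focus block and hence in $[1,M(j)]$. Finally inspect the colour $c$ of the point $f$: if $c$ equals the colour of some $P_\ell$, then $P_\ell\cup\{f\}$ is a monochromatic $k$-term progression and we are done; otherwise the progression $\{f,f+DL,\dots,f+(k-2)DL\}$ is monochromatic of colour $c$ with focus $g$, and adjoining it to the $j-1$ diagonalized progressions yields a color-focused family of size $j$ with focus $g\in[1,M(j)]$. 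This proves the auxiliary statement, and taking $j=r$ together with the ``one line'' above gives $W(k,r)\le M(r)$, closing the outer induction.

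I expect the only real difficulty to be organizational: pinning down the auxiliary induction hypothesis so that it is simultaneously strong enough to self-propagate and careful enough to keep the focus point inside the active interval, and then bookkeeping the arithmetic of block lengths, index gaps, and the ``diagonal'' common differences $d_\ell+DL$ so that the translated progressions really are monochromatic and their common focus really is in range. There is no deep analytic content; the cleverness is entirely in the choice of what to induct on. (Alternatively one could derive the theorem in a few lines from the Hales--Jewett theorem on combinatorial lines in $[k]^{n}$ — at the cost of proving that statement, whose induction is similar in spirit — or simply invoke Szemer\'edi's theorem, which is far stronger than needed.)
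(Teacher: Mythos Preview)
Your argument is the standard double-induction (color-focused, or ``fan'') proof of van der Waerden's theorem, and it is correct as written; the bookkeeping on block lengths and on the diagonalized common differences $d_\ell+DL$ is sound, and the focus stays within $[1,M(j)]$ by your choice $M(j)=2L\cdot W(k-1,r^{L})$.

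However, note that the paper does not prove this theorem at all: Theorem~\ref{T-W} is quoted in the introduction as a classical background result, with a citation to van der Waerden~\cite{W27}, and no proof is given or needed for the paper's own results. So there is no ``paper's own proof'' to compare against. Your write-up supplies a self-contained proof where the paper simply invokes the literature; that is more than was required, but what you have written is correct.
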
 

As early as 1936,  Erd\H{o}s and Tur\'an have suggested that a stronger
{\em density} statement must hold. Only in 1975, Szemer\'{e}di
succeeded to confirm this belief with his celebrated result:

\begin{theorem} {\rm (Szemer\'{e}di \cite{S75}).} \label{T-S} 
For every positive integer $k$ and every $c>0$, there exists $N=N(k,c)$ 
such that every subset $X$ of $\{1,2,\ldots,N\}$ of size at least
$c N$ contains an arithmetic progression with $k$ terms.
\end{theorem}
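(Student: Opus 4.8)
Szemer\'edi's theorem is a deep classical result that this paper invokes as a black box; the route I would take to prove it is Furstenberg's ergodic-theoretic argument, which replaces the finitary density statement by a multiple-recurrence statement about measure-preserving systems. The first step is the \emph{correspondence principle}: given $X\subseteq\ZZ$ of positive upper density $\delta$, embed it in the shift space $\Omega=\{0,1\}^{\ZZ}$, put $A=\{\omega:\omega(0)=1\}$ with $T$ the shift, and extract a weak-$*$ limit, along an interval sequence realizing $\delta$, of the normalized sums of point masses at shifts of $\mathbbm{1}_X$; this yields a $T$-invariant probability measure $\mu$ with $\mu(A)=\delta$ such that
\[
\overline d\big(X\cap(X-n_1)\cap\cdots\cap(X-n_{k-1})\big)\;\ge\;\mu\big(A\cap T^{-n_1}A\cap\cdots\cap T^{-n_{k-1}}A\big)
\]
for all $n_1,\dots,n_{k-1}$. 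Thus it suffices to prove the \emph{Szemer\'edi property} (SZ): for every measure-preserving system $(\Omega,\mathcal B,\mu,T)$ and every $A$ with $\mu(A)>0$,
\[
\liminf_{N\to\infty}\frac1N\sum_{n=1}^N\mu\big(A\cap T^{-n}A\cap\cdots\cap T^{-(k-1)n}A\big)\;>\;0 ,
\]
since positivity of this average forces some term to be positive, giving a nontrivial $k$-term progression in $X$; a routine compactness argument (extracting a limiting bad set from hypothetical finite counterexamples of growing size) then upgrades this to the finitary form, with an ineffective $N(k,c)$.

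The substantive part is to establish SZ for \emph{all} systems, which I would do via the Furstenberg--Zimmer structure theory. Every system is the top of a transfinite tower of factors, built from the trivial factor by alternately passing to compact (isometric) extensions and, at limit ordinals, to inverse limits, reaching the maximal distal factor; the given system is then a weakly mixing extension of that. SZ is then proved by three modular lemmas: (i) SZ holds for the one-point system and is inherited by weakly mixing extensions, by a van der Corput estimate showing that on average the quantity $\mu(A\cap T^{-n}A\cap\cdots)$ behaves like $\mu(A)^k$; (ii) SZ is inherited by compact extensions, by exploiting almost-periodicity of the relevant conditional functions over the base to reduce, up to a small $L^2$ error, to a finite coloring of the base, to which a van der Waerden / base-case recurrence argument applies; (iii) SZ passes to inverse limits by an $L^2$-approximation. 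Transfinite induction along the tower then gives SZ in general, and hence the theorem.

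I expect the main obstacle to be step (ii), transporting recurrence across a single compact extension $\pi:X\to Y$. It requires the conditional Hilbert-module analysis of $L^2(X)$ over $L^2(Y)$: one decomposes a function that is bounded below on a positive-measure set into an almost-periodic part and a conditionally weakly mixing part, approximates the orbit $\{T^nf\}$ of the almost-periodic part by finitely many translates in conditional $L^2$-norm, and then feeds the resulting finite partition into a coloring version of the base-case recurrence, all while keeping the fiberwise error terms controlled. Step (i) needs an IP-van der Corput lemma, and step (iii) is routine but must not degrade the quantitative lower bound. One should note that this method is inherently ineffective; an explicit bound on $N(k,c)$ is obtained instead through the Fourier-analytic approach of Gowers, using the $U^{k-1}$ uniformity norms with a density-increment iteration, whose own crux is the inverse theorem identifying functions of large $U^{k-1}$ norm with ones correlating with a $(k-2)$-step nilsequence.
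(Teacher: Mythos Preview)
Your proposal is appropriate: the paper does not prove Theorem~\ref{T-S} at all, but merely states it with attribution to Szemer\'edi~\cite{S75} and remarks that F\"urstenberg~\cite{F77} later gave an ergodic-theoretic proof. You correctly recognize this and supply, in outline, exactly that ergodic route (correspondence principle, multiple recurrence, Furstenberg--Zimmer tower, and the three preservation lemmas for weakly mixing extensions, compact extensions, and inverse limits), which is the standard modern account and matches what the paper alludes to. There is nothing further to compare, since the paper offers no argument of its own for this statement.
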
 

This is a fundamental result with relations to many areas in mathematics. 
Szemer\'{e}di's proof is very complicated and is regarded as a 
mathematical tour de force in combinatorial reasoning \cite{GRS90,N95}. 
Another proof of this result was obtained by means of ergodic theory by  
F\"urstenberg~\cite{F77} in 1977. 

A homothetic copy of $\{1,2,\ldots,k\}^d$ is also called a $k$-{\em
grid} in $\RR^d$. The following generalization of Van der Waerden's
theorem to higher dimensions is given by the Gallai--Witt theorem~\cite{GRS90,N95}: 

\begin{theorem} {\rm (Gallai--Witt \cite{N95}).} \label{T-GW} 
For every positive integers $d$, $k$ and $r$, there exists a 
positive integer $N=N(d,k,r)$ with the following property: 
For every $r$-coloring of the integer lattice points in 
$\{1,2,\ldots,N\}^d$, there exists a monochromatic homothetic copy of 
$\{1,2,\ldots,k\}^d$. More precisely, there exist 
$(a_1,a_2,\ldots,a_d) \in \{1,2,\ldots,N\}^d$, and a positive
integer $x$ such that all points of the form 
$$ (a_1+i_1 x, a_2+i_2 x, \ldots, a_d+i_d x), 
\ \ \ i_1,i_2,\ldots,i_d \in \{0,1,\ldots,k-1\} $$
are of the same color.
\end{theorem}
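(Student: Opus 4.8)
\proof
The plan is to derive Theorem~\ref{T-GW} from the Hales--Jewett theorem, the combinatorial master result that already implies Theorem~\ref{T-W}: for all integers $m,r\ge 1$ there is an $n=\mathrm{HJ}(m,r)$ so that every $r$-colouring of the combinatorial cube $A^n$, with $A$ an arbitrary $m$-element alphabet, contains a monochromatic \emph{combinatorial line}. Recall that a combinatorial line is a set $\{p^{(v)} : v\in A\}$ determined by a nonempty set $I\subseteq\{1,\dots,n\}$ of ``active'' coordinates: $p^{(v)}_i=v$ for $i\in I$ and $p^{(v)}_i=c_i$ for the frozen coordinates $i\notin I$, where the $c_i\in A$ are fixed.

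First I would instantiate Hales--Jewett with the alphabet $A=\{1,2,\dots,k\}^d$ (so $m=k^d$) and with $r$ colours, obtaining $n=\mathrm{HJ}(k^d,r)$, and set $N:=kn$. Given an $r$-colouring $\chi$ of the lattice points of $\{1,\dots,N\}^d$, I would pull it back to a colouring $\chi'$ of $A^n$ by $\chi'(v_1,\dots,v_n):=\chi(v_1+\cdots+v_n)$, the sum taken coordinatewise in $\ZZ^d$; since each $v_i$ has entries in $\{1,\dots,k\}$, every such sum lies in $\{n,\dots,kn\}^d\subseteq\{1,\dots,N\}^d$, so $\chi'$ is well defined.

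Next I would apply Hales--Jewett to $\chi'$ to obtain a $\chi'$-monochromatic combinatorial line with active set $I$, $|I|=x\ge 1$, and frozen symbols $c_i\in A$. Putting $b:=\sum_{i\notin I}c_i\in\ZZ^d$, the $\chi'$-value of the line point with active symbol $v$ equals $\chi(b+xv)$. As $v$ ranges over $A=\{1,\dots,k\}^d$, the vectors $b+xv$ are exactly the points $(a_1+i_1x,\dots,a_d+i_dx)$ with $i_j\in\{0,\dots,k-1\}$ and $a:=b+x(1,\dots,1)$; a short check gives $a\in\{1,\dots,N\}^d$ and that every such grid point lies in $\{1,\dots,N\}^d$, and all of them receive the same $\chi$-colour because the line was monochromatic. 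This is precisely the conclusion of Theorem~\ref{T-GW}, with $N(d,k,r):=k\cdot\mathrm{HJ}(k^d,r)$.

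The main obstacle is the Hales--Jewett theorem itself: its proof is a ``colour-focusing'' induction on the alphabet size --- the same idea that drives Theorem~\ref{T-W} --- and a careful execution (Shelah's, say) is what makes $\mathrm{HJ}(m,r)$, and hence $N(d,k,r)$, primitive recursive. If one wished to avoid invoking Hales--Jewett as a black box, the alternative would be to prove directly, by induction on $d$, a strengthened ``many simultaneously focused colours'' statement inside $\ZZ^d$, again mirroring the standard proof of Theorem~\ref{T-W}; a third, far less economical option is to quote the density theorem of F\"urstenberg and Katznelson for $\{1,\dots,N\}^d$ and apply the pigeonhole principle to the $r$ colour classes.
\endproof
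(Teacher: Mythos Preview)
The paper does not actually prove Theorem~\ref{T-GW}; it is quoted in the introduction as a classical background result (with a citation to \cite{N95}) and is never revisited. So there is no ``paper's own proof'' to compare against.

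That said, your argument is correct and is one of the standard derivations of the Gallai--Witt theorem from Hales--Jewett. The reduction is set up cleanly: the alphabet $A=\{1,\dots,k\}^d$, the map $(v_1,\dots,v_n)\mapsto v_1+\cdots+v_n$, and the identification of a combinatorial line with a homothetic $k$-grid all check out. In particular, with $|I|=x$ and $b=\sum_{i\notin I}c_i$ one has each coordinate of $b$ in $\{n-x,\dots,k(n-x)\}$, hence each coordinate of $a=b+x(1,\dots,1)$ lies in $\{n,\dots,kn-(k-1)x\}\subseteq\{1,\dots,N\}$, and the extreme grid point $a+(k-1)x(1,\dots,1)$ has coordinates at most $kn=N$; so the ``short check'' you allude to indeed goes through. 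Your closing remarks about the alternative routes (direct colour-focusing in $\ZZ^d$, or pigeonhole from F\"urstenberg--Katznelson) are accurate as well.
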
 

A higher dimensional generalization of Szemer\'{e}di's density
theorem was obtained by F\"urstenberg and Katznelson \cite{FK78}; see
also \cite{N95}. 

\begin{theorem} {\rm (F\"urstenberg--Katznelson \cite{FK78}).} \label{T-FK} 
For every positive integers $d$, $k$ and  every $c>0$, there exists
a positive integer $N=N(d,k,c)$  with the following property: 
every subset $X$ of $\{1,2,\ldots,N\}^d$ of size at least
$c N^d$ contains a homothetic copy of $\{1,2,\ldots,k\}^d$.
\end{theorem}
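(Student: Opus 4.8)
\medskip
\noindent\textbf{Proof proposal (for Theorem~\ref{T-FK}).}
The plan is to follow the ergodic-theoretic route of F\"urstenberg and Katznelson: pass from the finitary density statement to an infinitary multiple-recurrence statement by a correspondence principle, and then establish the recurrence by a structural induction on the measure-preserving system. A purely combinatorial alternative via the hypergraph removal lemma is also available and is sketched at the end.

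First I would set up the \emph{Furstenberg correspondence principle}. Suppose the conclusion fails: there are $c>0$, an integer $k$, and an infinite sequence $N_1<N_2<\cdots$ together with sets $X_m\subseteq\{1,\ldots,N_m\}^d$ with $|X_m|\ge cN_m^d$ containing no homothetic copy of $\{1,\ldots,k\}^d$. Regard each $X_m$ as a subset of $\ZZ^d$. On the compact space $\Omega=\{0,1\}^{\ZZ^d}$ with the shift $\ZZ^d$-action $(S^v)_{v\in\ZZ^d}$, form the empirical measures attached to translates of $\mathbf{1}_{X_m}$ and pass to a weak-$*$ limit point, which can be arranged to be a shift-invariant Borel probability measure $\mu$ by averaging over the boxes $\{1,\ldots,N_m\}^d$. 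Setting $A=\{\omega\in\Omega:\omega(0)=1\}$ gives $\mu(A)\ge c$, while the absence of $k$-grids in the $X_m$ forces $\mu(\bigcap_{v\in F}S^{-xv}A)=0$ for every $x\in\ZZ\setminus\{0\}$, where $F=\{0,1,\ldots,k-1\}^d$. This contradicts the F\"urstenberg--Katznelson multiple recurrence theorem: for any measure-preserving $\ZZ^d$-action $(T^v)$ on a probability space, any finite $F\subseteq\ZZ^d$, and any $A$ with $\mu(A)>0$, one has $\liminf_{M\to\infty}\tfrac1M\sum_{x=1}^{M}\mu(\bigcap_{v\in F}T^{-xv}A)>0$, so in particular some $x>0$ works.

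To prove this recurrence statement I would invoke the F\"urstenberg--Zimmer structure theory, representing the system as a transfinite tower of factors in which each successive extension is either \emph{compact} (relatively isometric, ``relatively almost periodic'') or \emph{relatively weakly mixing}. One then shows that the desired property --- call it the $SZ$ property, that the $\liminf$ above is positive for every positive-measure $A$ --- (i)~holds trivially for the one-point system; (ii)~is preserved under inverse limits; (iii)~is preserved under relatively weakly mixing extensions, via a relative van der Corput estimate showing such an extension contributes nothing new to the $\liminf$; and (iv)~is preserved under compact extensions. Transfinite induction up the tower then gives the property for the whole system, hence for the shift system above. Step~(iv) is where I expect the real work to lie: for several commuting transformations and an arbitrary finite pattern $F$, producing return times $x$ that are good \emph{simultaneously} for all $v\in F$ requires a careful almost-periodicity argument on the fibers, and it is precisely here that a Gallai--Witt / van der Waerden-type pigeonhole (Theorem~\ref{T-GW}) reappears inside the ergodic proof.

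The combinatorial alternative would be to deduce Theorem~\ref{T-FK} from the hypergraph removal lemma: one encodes a $k$-grid in $X$ as a prescribed configuration in an auxiliary bounded-uniformity multipartite hypergraph whose vertex classes record suitable linear forms in the coordinates, and argues that a positive-density grid-free $X$ would put one of the classes below the density threshold, contradicting removal. Here the obstacle is merely relocated --- into the proofs of hypergraph regularity and of the removal lemma itself. Either way, once the cube case $\{1,\ldots,k\}^d$ is established, extending to an arbitrary finite pattern $P\subseteq\ZZ^d$ is routine, since $P$ embeds in a sufficiently large grid and hence a homothetic copy of that grid contains a homothetic copy of $P$.
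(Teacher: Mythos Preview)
The paper does not give its own proof of Theorem~\ref{T-FK}; it is quoted as a background result with a citation to~\cite{FK78}, and the only comment on the proof is the sentence ``The proof of F\"urstenberg and Katznelson uses infinitary methods in ergodic theory. As noted in~\cite{N95}, no combinatorial proof is known.'' So there is nothing in the paper to compare your attempt against.

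That said, your outline is a faithful sketch of the actual F\"urstenberg--Katznelson argument: the correspondence principle reducing the density statement to a multiple-recurrence statement for commuting $\ZZ^d$-actions, followed by the structure-theoretic induction (trivial base, inverse limits, relatively weakly mixing extensions via a van der Corput argument, and compact extensions where a coloring argument of Gallai--Witt type enters). You correctly identify step~(iv), the compact-extension step for several commuting maps, as the crux. Your mention of the hypergraph-removal alternative is also apt and in fact updates the paper's remark that ``no combinatorial proof is known'': purely combinatorial proofs via hypergraph regularity and removal were found subsequently (Gowers; Nagle--R\"odl--Schacht--Skokan; Tao). None of this, however, is what the present paper is doing --- Theorems~\ref{T1}--\ref{T3} are \emph{approximate} analogues with elementary self-contained proofs, and Theorem~\ref{T-FK} is invoked only for context.
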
 

The proof of F\"urstenberg and Katznelson uses infinitary methods in
ergodic theory. As noted in \cite{N95}, no combinatorial proof is
known.  

\old{
Many other Ramsey type problems in the Euclidean space 
have been investigated in a series of papers 
by Erd\H{o}s et~al.\ \cite{EG+73,EG+75a,EG+75b} in the early 1970s,
and by Graham \cite{G90,G94}. See also Ch.~6.3 in the problem collection by 
Bra\ss, Moser and Pach \cite{BMP05}.
} 

In the first part of our paper (Section~\ref{S-approximate}), we
present analogues of Theorems~\ref{T-W},~\ref{T-S},~\ref{T-GW}, and~\ref{T-FK}, 
for point sets in the Euclidean space. Specifically, we obtain
(restricted) Ramsey theorems for {\em separated} point sets, for finding
approximate homothetic copies of an arithmetic progression on the line
and respectively of a grid in $\RR^d$.  
The latter result carries over for any finite pattern point set and
every dense and sufficiently large separated point set in $\RR^d$. 
It is worth noting that the separation condition is necessary 
for such results to hold (Proposition~\ref{P1} in Section~\ref{S-approximate}).  
While for Theorems~\ref{T-W},~\ref{T-S},~\ref{T-GW}, and~\ref{T-FK},
the separation condition comes for free for any set of integers, it
has to be explicitly enforced for point sets.

The exact statements of our results (Theorems~\ref{T1},~\ref{T2} and
\ref{T3}) are to be found in Section~\ref{S-approximate} following the
definitions. Fortunately, the proofs of these theorems are much
simpler than of their exact counterparts previously
mentioned. Moreover, the resulting upper bounds are much better
than those one would get from the integer theorems. The proofs are
constructive and yield very simple and efficient algorithms for
computing the respective approximate homothetic copies given input
point sets satisfying the requirements.   

In the second part (Section~\ref{S-collinear}), we present an
unrestricted theorem (Theorem~\ref{T4}) which shows the existence of 
an arbitrary large subset of almost collinear points in every
sufficiently large point set in $\RR^d$. No separation condition is 
needed in this result. 

\vspace{-0.5\baselineskip}
\paragraph {Applications.} 
Many other Ramsey type problems in the Euclidean space 
have been investigated in a series of papers 
by Erd\H{o}s et~al.\ \cite{EG+73,EG+75a,EG+75b} in the early 1970s,
and later by Graham \cite{G80,G83,G85,G90,G94}. 
Van der Waerden's theorem on arithmetic progressions has inspired 
new connections and numerous results in number theory, combinatorics,
and combinatorial
geometry~\cite{BP05,DJ10,EG79,G80,G04,G07,G08,GRS90,GS06,HJ63,N95,S94},
where we only named a few here.

Our analogues of Theorems~\ref{T-W},~\ref{T-S},~\ref{T-GW}, and~\ref{T-FK},  
for point sets in the Euclidean space 
may also find fruitful applications in combinatorial and computational geometry.
It is obvious that general point sets are much more common in these
areas than the rather special integer or lattice point sets that occur
in number theory and integer combinatorics.
A first application needs to be mentioned: A result similar
to our Theorem~\ref{T1} has been proved instrumental in 
settling a conjecture of Mitchell~\cite{M07} on
illumination for maximal unit disk packings: It is shown~\cite{DJ10}
that any dense (circular) forest with congruent unit trees that is
deep enough has a hidden point. The result that is needed there
is an approximate equidistribution lemma for separated points on the
line, which is a relaxed version of our Theorem~\ref{T1}.

\section{Approximate homothetic copies of any pattern} \label {S-approximate}

\paragraph {Definitions.} Let $\delta>0$. A point set $S$ in $\RR^d$
is said to be $\delta$-{\em separated} if the minimum pairwise distance
among points in $S$ is at least $\delta$.
For two points $p,q \in \RR^d$, let $d(p,q)$ denote the Euclidean
distance between them. 
The closed ball of radius $r$ in $\RR^d$ centered at point $z=(z_1,\ldots,z_d)$ is
$$ B_d(z,r) = \{ x \in \RR^d \ | \ d(z,x) \leq r \}=
\{(x_1,\ldots,x_d) \ | \ \sum_{i=1}^d (x_i -z_i)^2 \leq r^2 \}. $$

Given a point set (or ``pattern'')
$P=\{p_1,\ldots,p_k\}$ of $k$ points in $\RR^d$ and another point set
$Q$ with $k$ points: (i) $Q$ is {\em similar} to $P$, if it is a
magnified/shrunk and possibly rotated copy of $P$.
(ii) $Q$ is {\em homothetic} to $P$, if it is a magnified/shrunk copy
of $P$ in the same position (with no rotations). 

Approximate similar copies and approximate homothetic copies are defined
as follows. See also Fig.~\ref{f1} for an illustration.
Given point sets $P$ and $Q$ as above and $0 <\eps \leq 1/3$:
\begin{itemize}
\item $Q$ is an $\eps$-{\em approximate similar} copy of of $P$, if 
there exists $Q'$ so that $Q'$ is similar to $P$, and each point 
$q'_i \in Q'$ contains a (distinct) point $q_i \in Q$ in the ball of radius 
$\eps d $ centered at $q'_i$, where $d$ is the minimum
pairwise distance among points in $Q'$. 
\item $Q$ is an $\eps$-{\em approximate homothetic} copy of of $P$, if 
there exists $Q'$ so that $Q'$ is homothetic to $P$, and
each point $q'_i \in Q'$ contains a (distinct) point $q_i \in Q$ in
the ball of radius $\eps d$ centered at $q'_i$, where $d$ is the
minimum pairwise distance among points in $Q'$.  
\end{itemize}

\begin{figure} [htb]
\centerline{\epsfxsize=5in \epsffile{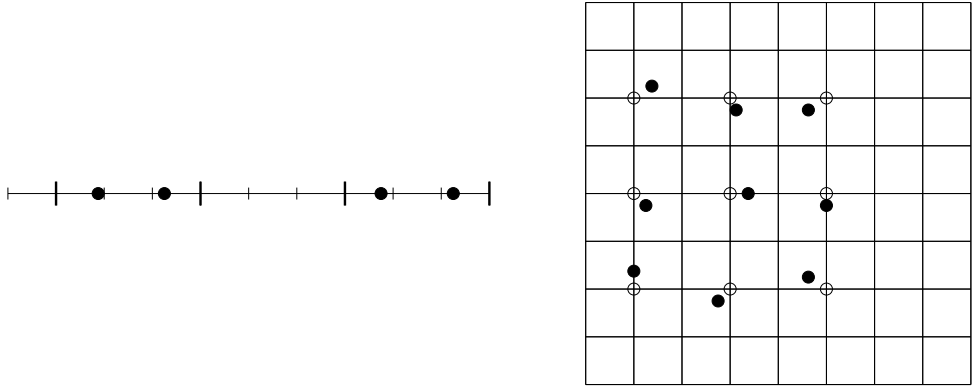}}
\caption{\small Left: a $4$-term arithmetic progression (thick vertical
bars) and a $1/3$-approximate $4$-term arithmetic progression (filled
circles) on the line. 
Right: a $3$-grid (empty circles) and a $1/4$-approximate $3$-grid
(filled circles) in $\RR^2$.}
\label{f1}
\end{figure}

The condition $\eps \leq 1/3$ is imposed to ensure
that any two balls of radius $\eps d$ around points in $Q'$ are
disjoint, and moreover, that any two distinct points of $Q$ are
separated by a constant times $d$, in this case by at least $d/3$. 
In our theorems, $\eps$-approximate means $\eps$-approximate homothetic
copy. We start with $\eps$-approximate arithmetic progressions on the
line by proving the following analogue of Theorem~\ref{T-S} for points
on the line: 

\begin{theorem} \label{T1}
For every positive integer $k$, $c,\delta>0$, and $0<\eps\leq 1/3$, 
there exists a positive number $Z_0=Z_0(k,c,\delta,\eps)$  with the
following property:  Let $S$ be a $\delta$-separated point set in an 
interval $I$ of length $|I|=L$ with at least $cL$ points, where $L
\geq Z_0$. Then $S$ contains a $k$-point subset that forms an $\eps$-approximate 
arithmetic progression of $k$ terms. Moreover, one can set
$$ Z_0(k,c,\delta,\eps) = 2 \delta \cdot (ks)^j, {\rm \ \ where \ \ }
s=\left\lceil \frac{1}{\eps} \right\rceil, \ \  
r =\frac{k}{k-1}, \ \ 
j = \left \lceil \frac{\log \frac{2}{c \delta}} {\log r} \right \rceil. 
$$
\end{theorem}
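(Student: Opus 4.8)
The plan is to use a pigeonhole-on-scales argument that avoids Szemer\'edi's theorem entirely. First I would partition the interval $I$ (after scaling so that $\delta=1$, say) into consecutive blocks and iterate: at each stage I have an interval containing many points of $S$; I cut it into $ks$ equal pieces, and by averaging one of these pieces still contains at least a $1/(ks)$ fraction of the ambient density — but actually the right bookkeeping is to track density relative to length. The key quantitative observation is that if a subinterval of length $\ell$ contains at least $c'\ell$ points of a $\delta$-separated set, then $c'\le 1/\delta$ automatically, and as long as $c'\ell$ is still reasonably large we may subdivide. The parameter $j$ in the statement is exactly the number of halving-type steps (with ratio $r=k/(k-1)$) needed before the density forces enough points into a short window; the factor $(ks)^j$ is the total shrinkage incurred over those $j$ steps when each step refines by a factor $ks$.

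Here is the structure I would follow. Step 1: reduce to $\delta$ equal to a convenient value, or equivalently carry $\delta$ through as a unit of length; note $s=\lceil 1/\eps\rceil$ and $r=k/(k-1)$. Step 2: show by induction on $i=0,1,\dots,j$ that $S$ contains a subinterval $I_i\subseteq I$ of length $|I_i| = L/(ks)^i$ containing at least $c_i |I_i|$ points, where $c_i$ grows like $c\, r^i$ — the point being that when we chop $I_i$ into $ks$ equal subintervals, the densest one has density at least $c_i$, but because the set is $\delta$-separated we can argue the densest subinterval in fact gains a factor close to $r = k/(k-1)$ over the average once the count is large (this is where the combinatorics of separation enters). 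Step 3: after $j = \lceil \log(2/(c\delta)) / \log r \rceil$ steps the density $c_j$ has been driven up to the point where the current interval $I_j$, of length $L/(ks)^j \ge 2\delta$ by the choice $L\ge Z_0$, is forced to contain at least $k$ points with at least one point in each of $s$ roughly-equal sub-blocks — equivalently, $I_j$ has length at least $2\delta$ and so few "gaps" that $k$ of its points land within the prescribed $\eps$-tolerance of a genuine $k$-term arithmetic progression spanning $I_j$. Step 4: verify that the $k$ chosen points indeed form an $\eps$-approximate arithmetic progression: the target progression $Q'$ has common gap $d = |I_j|/(k-1)$ and the width of each of the $s = \lceil 1/\eps\rceil$ landing windows around the $k$ ideal points is at most $d/s \le \eps d$, matching the definition with the $\eps d$-ball condition, and the $\delta$-separation guarantees the selected points are distinct.

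The main obstacle is Step 2: extracting a \emph{clean multiplicative density gain} of ratio $r=k/(k-1)$ at each refinement, rather than the naive gain you would get just from averaging over $ks$ pieces (which only gives back a factor $ks$ in length for no density gain at all). The trick is that we do not need a progression packed into one tiny window; we need $k$ points spread \emph{across} a window, one near each of $k$ equally spaced positions. So the correct induction should be on "the interval still has enough points that, when split into $k$ equal parts, every part is nonempty (or all but few are)", and the failure of this is what lets us pass to a proper subinterval where the density has genuinely increased by the factor $\tfrac{k}{k-1}$ — because discarding one empty $k$-th of the interval multiplies density by $k/(k-1)$. Iterating this at most $j$ times must terminate, since density cannot exceed $1/\delta$ and $c r^j \ge 2/\delta$ by the choice of $j$; at termination every $k$-th is nonempty, and combining this with one more level of refinement into $ks$ parts (to get the points within $\eps d$ of the ideal spots) produces the desired approximate progression inside an interval of length at least $2\delta$, which is guaranteed by $L \ge Z_0 = 2\delta (ks)^j$.
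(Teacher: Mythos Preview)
Your architecture is right and matches the paper's: a density-increment iteration in which failure to find the desired configuration at a given scale forces a multiplicative gain of $r=k/(k-1)$ in density, terminated after at most $j$ steps because $\delta$-separation caps the density near $1/\delta$. But the mechanism you describe for securing the $\eps$-precision has a genuine gap. In the obstacle paragraph you iterate on $k$-splits (if one of the $k$ parts is empty, recurse on the densest of the other $k-1$), and once all $k$ parts are nonempty you propose ``one more level of refinement into $ks$ parts''. That last step does not work: at termination you only know each $k$-th contains \emph{some} point of $S$, and after refining each $k$-th into $s$ cells these $k$ points need not occupy the same residue class mod $s$, so you only get a $1$-approximate AP, not an $\eps$-approximate one. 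There is also an internal inconsistency: splitting by $k$ at each step gives $|I_i|=L/k^i$, not $L/(ks)^i$ as you write in Step~2, so the stated value of $Z_0$ would not come out. Finally, the density gain in Step~2 is not a consequence of $\delta$-separation as you claim; separation only supplies the \emph{upper} bound on density that halts the iteration.

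The fix, which is precisely what the paper does, is to fold the $s$-refinement into every step. Split $I_i$ into $ks$ equal pieces of length $x=|I_i|/(ks)$ and view them as $s$ interleaved systems $\mathcal{I}_0,\dots,\mathcal{I}_{s-1}$, where $\mathcal{I}_t$ consists of the $k$ pieces with left endpoints $tx,(t+s)x,\dots,(t+(k-1)s)x$. If some $\mathcal{I}_t$ has all $k$ of its pieces nonempty, the $k$ selected points are each within $x\le \eps\cdot(sx)$ of an exact AP with gap $sx$, and you are done. Otherwise \emph{every} $\mathcal{I}_t$ contributes an empty piece, so at least $s$ of the $ks$ pieces are empty; recursing on the densest of the $\le (k-1)s$ survivors multiplies density by at least $ks/((k-1)s)=r$. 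Now both $|I_i|=L/(ks)^i$ and $m_i\ge cL/((k-1)^i s^i)$ hold, and at step $j$ the separation bound $m_j\delta\le L_j+\delta\le \tfrac{3}{2}L_j$ collides with $m_j\delta\ge 2L_j$, forcing earlier termination.
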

\begin{proof} 
Without loss of generality, $I=[0,L]$. 
Put $s=\lceil \frac{1}{\eps} \rceil $. Conduct an iterative process as follows.
In step 0: Let $I_0=I$, and subdivide the interval $I_0$ into $ks$
half-closed intervals\footnote{When subdividing a closed interval, the
first $k-1$ resulting sub-intervals are half-closed, and the $k$th
sub-interval is closed. 
When subdividing a half-closed interval, all resulting sub-intervals
are half-closed.} of equal length. Let $x=L/(ks)$ be the common length
of the sub-intervals. For $t=0,\ldots,s-1$ consider the
system $\I_t$ of $k$ disjoint sub-intervals with left endpoints of
coordinates $tx,(t+s)x,(t+2s)x,\ldots,(t+(k-1)s)x$. 
Observe that the $s$ systems of intervals $\I_t$ partition the interval $I_0$.
If for some $t$ , $0 \leq t \leq s-1$, each of the $k$ intervals
contains at least one point in $S$, stop. 
Otherwise in each of the $s$ systems of $k$ intervals, 
at least one of the $k$ intervals is empty, so all the points are
contained in at most $ks-s$ intervals from the total of $ks$. 
Now pick one of the remaining $(k-1)s$ intervals, which contains the
most points of $S$, say $I_1$. 
In step $i$, $i \geq 1$: Subdivide $I_i$ into $ks$ half-closed intervals
of equal length and proceed as before. 

In the current step $i$, the process either (i) terminates 
successfully by finding an interval $I_i$ subdivided into $ks$
sub-intervals making  $s$ systems of intervals, and in at least one of
the systems, each sub-interval contains at least one point in $S$, or
(ii) it continues with another subdivision in step $i+1$. We show that
if $L$ is large enough, and the number of subdivision steps is large
enough, the iterative process terminates successfully. 

Let $L_0=|I|=L$ be the initial interval length, and $m_0 \geq c|I|$ be the
(initial) number of points in $I_0$.  
At step $i$, $i \geq 0$, let $m_i$ be the number of points in $I_i$, and 
let $L_i =|I_i|$ be the length of interval $I_i$. Clearly
\begin{equation} \label{E13}
L_i= \frac{L}{(ks)^i}, {\rm \ \ and \ \ } 
m_i \geq \frac{m_0}{(k-1)^i s^i} \geq \frac{c L}{(k-1)^i s^i} . 
\end{equation}
Let $j$ be a positive integer so that
\begin{equation} \label{E11}
c \cdot \delta \cdot \left (\frac{k}{k-1} \right)^j \geq 2, 
{\rm \ \ e.g., set \ \ } 
j = \left \lceil \frac{\log \frac{2}{c \delta}} {\log r} \right \rceil, 
{\rm \ \ where \ \ } r =\frac{k}{k-1}. 
\end{equation}
Now set $Z_0(k,c,\delta,\eps) = 2 \delta \cdot (ks)^j$. 
If $L \geq Z_0$, as assumed, then by our choice of parameters we have
\begin{equation} \label{E1}
L_j = \frac{L}{(ks)^j} \geq \frac{Z_0}{(ks)^j} =
\frac{2 \delta \cdot (ks)^j}{(ks)^j} =2\delta, 
\end{equation}
and
\begin{equation} \label{E14}
m_j \cdot \delta \geq \frac{c L \cdot \delta}{(k-1)^j s^j} =
c \cdot \delta \cdot \left (\frac{k}{k-1} \right)^j \frac{L}{(ks)^j} 
\geq \frac{2L}{(ks)^j}= 2 L_j. 
\end{equation}
Since the point set is $\delta$-separated, an interval packing argument 
on the line using~\eqref{E1} gives 
\begin{equation} \label{E15}
m_j \delta \leq L_j+\frac{\delta}{2}+\frac{\delta}{2} = L_j+\delta
\leq \frac{3}{2} L_j. 
\end{equation}
Observe that~\eqref{E15} is in contradiction to~\eqref{E14},  
which means that the iterative process cannot reach step $j$.
We conclude that for some $0 \leq i \leq j-1$, step $i$ is successful:
we found a system of $k$ intervals of length $x$ with left endpoints at
coordinates $a_0=tx,a_1=(t+s)x,a_2=(t+2s)x,\ldots,a_{k-1}=(t+(k-1)s)x$, 
each containing a distinct point, say $b_p \in S$, $p=0,1,\ldots,k-1$. 
Observe that the $k$ points $\{a_p : p=0,1,\ldots,k-1\}$ form an (exact) 
arithmetic progression of $k$ terms with common difference equal to $sx$. 
It is now easy to verify that the $k$ points $b_p$ form an $\eps$-approximate
arithmetic progression of $k$ terms, since for $p=0,1,\ldots,k-1$
$$ a_p \leq b_p \leq a_p +x \ \ {\rm and } \ \ \eps s \geq 1, 
\ \ {\rm thus } \ \ x \leq \eps s x \ \ {\rm and } \ \  
b_p \in [a_p,a_p+ \eps sx]. $$ 
This completes the proof.
\end{proof}

The next proposition shows that the separation condition in the
theorem is necessary, for otherwise, even a $3$-term approximate
arithmetic progression cannot be guaranteed, irrespective of the size
of the point set.

\begin{proposition} \label{P1}
For any $n$, and for any $0 \leq \eps <1/3$, 
there exists a set of $n$ points in $[0,1]$, without an
$\eps$-approximate arithmetic progression of $3$ terms.   
\end{proposition}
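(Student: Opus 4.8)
The plan is to reduce the non-existence of an $\eps$-approximate $3$-term progression to a single inequality on ratios of consecutive gaps, and then to take a point set whose gaps decay geometrically fast.

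First I would record a necessary condition for three points to form an $\eps$-approximate arithmetic progression. Suppose $\{q_1<q_2<q_3\}\subseteq[0,1]$ is such a set. By definition there is an exact $3$-term progression $Q'=\{y_1,y_2,y_3\}$ with $y_2-y_1=y_3-y_2=g>0$ (nondegenerate, since $Q'$ must consist of three distinct points), together with a bijection sending each $y_m$ to a distinct point of $\{q_1,q_2,q_3\}$ lying in the interval $[y_m-\eps g,\ y_m+\eps g]$. Because $\eps<1/3<\tfrac12$, the three intervals $[y_m-\eps g,\ y_m+\eps g]$ are pairwise disjoint and appear in left-to-right order, so the bijection is forced to be order-preserving, i.e. $q_m\in[y_m-\eps g,\ y_m+\eps g]$ for $m=1,2,3$. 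Hence both $q_2-q_1$ and $q_3-q_2$ lie in $[(1-2\eps)g,\ (1+2\eps)g]$, and in particular
\[
  \frac{q_2-q_1}{q_3-q_2}\ \le\ \frac{1+2\eps}{1-2\eps}.
\]
So it suffices to construct $n$ points $x_1<x_2<\dots<x_n$ in $[0,1]$ such that every triple $x_i<x_j<x_k$ violates this bound, that is $(x_j-x_i)/(x_k-x_j)>\tfrac{1+2\eps}{1-2\eps}$.

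For the construction I would use geometrically decreasing gaps. Since $\eps<1/3$ we have $\tfrac{1-2\eps}{2}>\tfrac16>0$, so we may fix $\lambda$ with $0<\lambda<\tfrac{1-2\eps}{2}$; a short computation shows this is exactly the condition $\tfrac{1-\lambda}{\lambda}>\tfrac{1+2\eps}{1-2\eps}$. Put $g_\ell=(1-\lambda)\lambda^{\ell-1}$ for $\ell=1,\dots,n-1$ and $x_i=\sum_{\ell=1}^{i-1}g_\ell$, so that $x_1=0$ and $x_n=1-\lambda^{\,n-1}<1$; hence all points lie in $[0,1]$ and are distinct. For a triple $x_i<x_j<x_k$, the numerator $x_j-x_i=\sum_{\ell=i}^{j-1}g_\ell$ is at least its largest term $g_i$, while the denominator $x_k-x_j=\sum_{\ell=j}^{k-1}g_\ell$ is a geometric sum bounded by $\tfrac{g_j}{1-\lambda}\le\tfrac{\lambda\,g_i}{1-\lambda}$ (using $j\ge i+1$, so $g_j\le\lambda g_i$). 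Therefore $(x_j-x_i)/(x_k-x_j)\ge\tfrac{1-\lambda}{\lambda}>\tfrac{1+2\eps}{1-2\eps}$, and by the necessary condition above no three of the $x_i$ form an $\eps$-approximate arithmetic progression.

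The one step that requires a little care is the opening observation that the matching between the point set and the exact progression $Q'$ must respect the ordering on the line; this is what lets one pass from the somewhat indirect definition of an approximate homothetic copy to the concrete inequality on consecutive gap ratios. Everything after that is an elementary estimate on geometric sums. (The case $\eps=0$ needs no separate treatment: the same argument applies with $\tfrac{1+2\eps}{1-2\eps}=1$, reflecting the obvious fact that a set with strictly decreasing gaps contains no exact $3$-term arithmetic progression.)
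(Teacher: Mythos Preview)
Your proof is correct and follows essentially the same strategy as the paper: exhibit a geometric point set and show that no three of its points can satisfy the necessary constraint imposed by the $\eps$-approximate AP definition. In fact your set $x_i=1-\lambda^{\,i-1}$ is an affine image of the paper's set $\{\xi^i\}$, so the constructions coincide up to reflection and the choice of ratio.

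The one genuine difference is in how the necessary condition is extracted. The paper works with the midpoint deviation, bounding $\bigl|\tfrac{q_1+q_3}{2}-q_2\bigr|$ by $\tfrac{\eps}{1-\eps}\,q_3$ and comparing with $\tfrac{q_3}{2}-q_2$, then using $q_2/q_3\le\xi$ from the specific construction to force a contradiction via a quadratic in $\eps$. You instead isolate the cleaner \emph{gap-ratio} criterion $(q_2-q_1)/(q_3-q_2)\le(1+2\eps)/(1-2\eps)$, which depends only on $\eps$ and not on the construction; this makes the role of the geometric decay more transparent and turns the choice of $\lambda$ into the single line $\lambda<(1-2\eps)/2$. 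Both routes are elementary, but yours separates the analytic condition from the example more modularly.
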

\begin{proof}
Let $\xi=\frac{1}{3}-\eps$. 
Let $S=\{\xi^i\ | \ i=0,\ldots,n-1\}$. Assume for contradiction that
$\{q_1,q_2,q_3\}$ is an $\eps$-approximate arithmetic progression of
$3$ terms, where $q_1<q_2<q_3$, and $q_1,q_2,q_3 \in S$. Then there
exist $a$ and $r>0$, so that $a-r$, $a$ and $a+r$ form a $3$-term
arithmetic progression, and we have: 
\begin{align*} \label{E4}
a-r-\eps r &\leq q_1 \leq a-r+\eps r, \\
a-\eps r &\leq q_2 \leq a+\eps r, \\
a+r-\eps r &\leq q_3 \leq a+r+\eps r.
\end{align*} 
From the first and the third inequalities we obtain
\begin{equation*} \label{E5}
a-\eps r \leq \frac{q_1+q_3}{2} \leq a+\eps r, 
\end{equation*} 
therefore 
\begin{equation} \label{E6}
\left| \frac{q_1+q_3}{2} -q_2 \right| \leq 2\eps r. 
\end{equation} 

Further note that
\begin{equation*} \label{E7}
q_3-q_1 \geq a+r-\eps r -(a-r +\eps r)= 2(1-\eps)r, 
\end{equation*} 
hence 
\begin{equation*} \label{E8}
r \leq \frac{q_3-q_1}{2(1-\eps)}. 
\end{equation*} 
By substituting this bound into~\eqref{E6}, we have
\begin{equation} \label{E9}
\left| \frac{q_1+q_3}{2} -q_2 \right| \leq \frac{\eps}{1-\eps} \cdot
(q_3-q_1) \leq \frac{\eps}{1-\eps} \cdot q_3. 
\end{equation} 
On the other hand
\begin{equation} \label{E10}
\left| \frac{q_1+q_3}{2} -q_2 \right| \geq \frac{q_1+q_3}{2} - q_2
\geq \frac{q_3}{2} - q_2.
\end{equation} 

Putting inequalities~\eqref{E9} and~\eqref{E10} together and dividing
by $q_3$ yields
$$ \frac{1}{2}- \frac{q_2}{q_3} \leq \frac{\eps}{1-\eps}. $$
Obviously, $\frac{q_2}{q_3} \leq \xi$, hence 
$$ \frac{1}{6} +\eps = \frac{1}{2}- \frac{1}{3} +\eps =
\frac{1}{2}- \xi \leq  \frac{1}{2}- \frac{q_2}{q_3} \leq
\frac{\eps}{1-\eps}. $$ 
Equivalently,
$$ \frac{1}{6} \leq \frac{\eps^2}{1-\eps}, $$ 
which is impossible for $\eps<1/3$. Indeed, the quadratic function
$f(x)=6x^2+x-1$ is strictly negative for $0<x<1/3$.
We have thereby reached a contradiction.
We conclude that $S$ has no $\eps$-approximate arithmetic progression
of $3$ terms.    
\end{proof}

\smallskip
\noindent{\bf Remark.} The following slightly different form of
Proposition~\ref{P1} may be convenient: 
For any $n$ there exists a set of $n$ points in $[0,1]$, without an 
$\eps$-approximate arithmetic progression of $3$ terms, for any 
$0 \leq \eps \leq 1/4$. For the proof, take 
$S=\{1/8^i\ | \ i=0,\ldots,n-1\}$, and proceed in the same way. 

\medskip
For a $d$-dimensional cube $\Pi_{i=1}^d [a_i,b_i]$, let us refer to  
$(a_1,\ldots,a_d)$ as the {\em first vertex} of the $d$-dimensional cube.
We now continue with $\eps$-approximate grids in $\RR^d$ by proving the
following analogue of Theorem~\ref{T-FK} for points in $\RR^d$:

\begin{theorem} \label{T2}
For every positive integers $d,k$, and $c,\delta>0$, and $0<\eps\leq 1/3$, 
there exists a positive number $Z_0=Z_0(d,k,c,\delta,\eps)$  with the
following property:  Let $S$ be a $\delta$-separated point set in the
$d$-dimensional cube $Q=[0,L]^d$, with at least $c L^d$ points, where $L \geq Z_0$.
Then $S$ contains a subset that forms an $\eps$-approximate $k$-grid
in $\RR^d$. Moreover, one can set
$$ Z_0(d,k,c,\delta,\eps) = 2 \delta \cdot (ks)^j, {\rm \ \ where \ \ }
s=\left\lceil \frac{\sqrt{d}}{\eps} \right\rceil, \ \  
r =\frac{k^d}{k^d-1}, \ \ 
j = \left \lceil \frac{\log \frac{\kappa_d}{c \delta}} {\log r} \right \rceil. 
$$
Here $\kappa_d$ (in the expression of $j$) is a constant depending on
$d$: 
\begin{equation}\label{E22}
\kappa_d=\left\lceil \frac{3^d \cdot (d/2)!}{\pi^{d/2}} \right \rceil,
{\rm \ if \ } d {\rm \ is \ even, \ and \ } 
\kappa_d=\left\lceil \frac{3^d \cdot (1\cdot3 \cdots d)}
{2\cdot (2\pi)^{(d-1)/2}} \right \rceil,
{\rm \ if \ } d {\rm \ is \ odd}.
\end{equation}
\end{theorem}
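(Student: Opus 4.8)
The plan is to run the same adaptive subdivision as in the proof of Theorem~\ref{T1}, now applied to $d$-dimensional cubes, and to replace the one-dimensional interval-packing step at the end by a volume-packing estimate built from the volume of a Euclidean ball.

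First I would assume $Q=[0,L]^d$ and set $s=\lceil\sqrt d/\eps\rceil$. In step $0$, subdivide $I_0:=Q$ into $(ks)^d$ congruent axis-parallel half-open sub-cubes of common side $x=L/(ks)$; write the first vertex of a sub-cube as $(a_1x,\dots,a_dx)$ with $(a_1,\dots,a_d)\in\{0,\dots,ks-1\}^d$, and decompose $a_\ell=t_\ell+i_\ell s$ with $t_\ell\in\{0,\dots,s-1\}$ and $i_\ell\in\{0,\dots,k-1\}$ (division with remainder). For each shift $t=(t_1,\dots,t_d)$ let $\I_t$ be the system of the $k^d$ sub-cubes obtained by letting $i=(i_1,\dots,i_d)$ range over $\{0,\dots,k-1\}^d$. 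The $s^d$ systems $\I_t$ partition $I_0$, and the first vertices of the sub-cubes of any one $\I_t$ form an exact $k$-grid of common spacing $sx$. If some $\I_t$ has all $k^d$ of its sub-cubes nonempty, stop; otherwise each of the $s^d$ systems has an empty sub-cube, so the points of $S$ lie in at most $(ks)^d-s^d=s^d(k^d-1)$ sub-cubes, and we recurse into the sub-cube $I_1$ containing the most points of $S$. Step $i\ge1$ repeats this with $I_i$ in place of $I_0$.

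Next I would do the bookkeeping exactly as in~\eqref{E13}--\eqref{E14}, with $k$ and $k-1$ replaced by $k^d$ and $k^d-1$: writing $L_i=L/(ks)^i$ for the side of $I_i$ and $m_i$ for the number of points of $S$ in $I_i$, one has $m_i\ge m_0/\bigl(s^d(k^d-1)\bigr)^i\ge cL^d/\bigl(s^d(k^d-1)\bigr)^i$, and substituting $L=L_i(ks)^i$ gives $m_j\ge c\,r^j L_j^d$ with $r=k^d/(k^d-1)$. Since $L\ge Z_0=2\delta(ks)^j$ we get $L_j\ge 2\delta$, so $\delta$-separation makes the balls $B_d(p,\delta/2)$, $p\in S\cap I_j$, pairwise disjoint and contained in a cube of side $L_j+\delta\le\tfrac32 L_j$; comparing volumes, with $\vol B_d(0,\rho)=\frac{\pi^{d/2}}{\Gamma(d/2+1)}\rho^d$, yields $m_j\le\frac{3^d\,\Gamma(d/2+1)}{\pi^{d/2}}\,(L_j/\delta)^d<\kappa_d\,(L_j/\delta)^d$, where $\kappa_d$ is as in~\eqref{E22} and the last inequality is strict because $\tfrac{3^d\Gamma(d/2+1)}{\pi^{d/2}}$ is never an integer. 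Taking $j$ large enough that $c\,\delta^d r^j\ge\kappa_d$ then forces $m_j\ge\kappa_d(L_j/\delta)^d$, a contradiction; hence the process succeeds at some step $i\le j-1$.

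Finally, at that successful step the $k^d$ first vertices of a full system $\I_t$ form an exact $k$-grid $Q'$ with minimum pairwise distance $sx$, and the point of $S$ picked in each sub-cube lies within distance $x\sqrt d\le\eps\,sx$ of the corresponding first vertex; since the sub-cubes are disjoint these $k^d$ points are distinct, so they form an $\eps$-approximate $k$-grid. This last line is where $s\ge\sqrt d/\eps$ is needed, the factor $\sqrt d$ being the diameter-to-side ratio of a sub-cube. I expect the only genuinely new ingredient over Theorem~\ref{T1} to be the volume-packing estimate: one has to check that the disjoint $\delta/2$-balls still fit once the enclosing cube is inflated by $\delta$, and extract the constant $\kappa_d$ from the separate even- and odd-dimensional formulas for the volume of the unit $d$-ball recorded in~\eqref{E22}; everything else is the one-dimensional argument run in each coordinate.
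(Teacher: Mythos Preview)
Your proposal is correct and follows the paper's own proof essentially line for line: the same iterative subdivision into $(ks)^d$ sub-cubes grouped into $s^d$ shifted systems of $k^d$ cubes each, the same recursion into the densest surviving sub-cube, the same bookkeeping $L_i=L/(ks)^i$ and $m_i\ge cL^d/\bigl(s^d(k^d-1)\bigr)^i$, the same packing argument bounding $m_j$ via disjoint $\delta/2$-balls inside a cube of side $L_j+\delta\le\tfrac32 L_j$, and the same final check $x\sqrt d\le \eps sx$. The only cosmetic addition is your remark that $3^d\Gamma(d/2+1)/\pi^{d/2}$ is never an integer to make the ceiling strict; the paper uses this implicitly when contrasting~\eqref{E17} with~\eqref{E3}.
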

\begin{proof} 
For simplicity of calculations, we first present the proof for $d=2$
by outlining the differences  from the one-dimensional case; the
argument for $d \geq 3$ is analogous, with the specific calculations
in the second part of the proof.  

Recall that we have set $\kappa_2=\lceil\frac{9}{\pi}\rceil=3$. 
Put $s=\lceil \frac{\sqrt2}{\eps} \rceil $. Conduct an iterative process as follows.
In step 0: Let $Q_0=Q$, and subdivide the square $Q_0$ into $(ks)^2$
smaller congruent squares. 
Let $x=L/(ks)$ be the common side length of these squares. For
$t_1,t_2 \in \{0,\ldots,s-1\}$ consider the system $\Q_{t_1,t_2}$ of $k^2$
disjoint squares with first vertices of coordinates 
$(t_1+i_1 s, t_2+i_2 s)x$, where $i_1,i_2 \in \{0,1,\ldots,k-1\}$. 
Observe that the $s^2$ systems of squares $\Q_{t_1,t_2}$ partition the
square $Q_0$. If for some $(t_1,t_2)$ , $0 \leq t_1, t_2 \leq s-1$,
each of the $k^2$ squares in the respective system contains at least
one point in $S$, stop. Otherwise in each of the $s^2$ systems of $k^2$ squares, 
at least one of the $k^2$ squares is empty, so all the points are
contained in at most $k^2 s^2-s^2$ squares from the total of $k^2 s^2$. 
Now pick one of the remaining $s^2(k^2-1)$ squares, which contains the
most points of $S$, say $Q_1$. 
In step $i$, $i \geq 1$: Subdivide $Q_i$ into $(ks)^2$ smaller congruent squares
and proceed as before. 

In the current step $i$, the process either (i) terminates 
successfully by finding a square $Q_i$ subdivided into $(ks)^2$
smaller squares making  $s^2$ systems of squares, and in at least one of
the systems, each smaller square contains at least one point in $S$, or
(ii) it continues with another subdivision in step $i+1$. We show that
similar to the one-dimensional case, if $L$ is large enough, and
the number of subdivision steps is large enough, the iterative process
terminates successfully.  

Let $L_0=L$ be the initial square side of $Q_0$, and $m_0 \geq c L^2$ be the
(initial) number of points in $Q_0$.  
At step $i$, $i \geq 0$, let $m_i$ be the number of points in $Q_i$, and 
let $L_i$ be the side length of $Q_i$. Clearly
$$ L_i= \frac{L}{(ks)^i}, {\rm \ \ and \ \ } 
m_i \geq \frac{m_0}{(k^2-1)^i s^{2i}} \geq \frac{c L^2}{(k^2-1)^i s^{2i}} . $$ 

Let $j$ be a positive integer so that
\begin{equation} \label{E12}
c \cdot \delta^2 \cdot \left (\frac{k^2}{k^2-1} \right)^j \geq \kappa_2=3, 
{\rm \ \ e.g., set \ \ } 
j = \left \lceil \frac{\log \frac{3}{c \delta^2}} {\log r} \right \rceil, 
{\rm \ \ where \ \ } r =\frac{k^2}{k^2-1}. 
\end{equation}
Now set $Z_0(2,k,c,\delta,\eps) = 2 \delta \cdot (ks)^j$.
If $L \geq Z_0$, as assumed, then by our choice of parameters we have
\begin{equation} \label{E2}
L_j = \frac{L}{(ks)^j} \geq \frac{Z_0}{(ks)^j} =
\frac{2 \delta \cdot (ks)^j}{(ks)^j} =2\delta, 
\end{equation}
and
\begin{equation} \label{E3}
m_j \cdot \delta^2 \geq \frac{c L^2 \cdot \delta^2}{(k^2-1)^j s^{2j}} =
c \cdot \delta^2 \cdot \left (\frac{k^2}{k^2-1} \right)^j \frac{L^2}{(ks)^{2j}} 
\geq \frac{3L^2}{(ks)^{2j}}= 3 L_j^2. 
\end{equation}

Note that~\eqref{E2} is identical with~\eqref{E1} from the
one-dimensional case. 
Since $S$ is $\delta$-separated, the disks of radius $\delta/2$
centered at the points of $S$ are interior-disjoint. A straightforward
packing argument yields 
\begin{equation} \label{E16}
m_j \frac{\pi \delta^2}{4} \leq  (L_j +\delta)^2 \leq 
\left(\frac{3}{2} L_j \right)^2= \frac{9}{4} L_j^2, 
\end{equation}
where the last inequality is implied by~\eqref{E2}.
Inequality~\eqref{E16} is equivalent to
\begin{equation} \label{E17}
m_j \cdot \delta^2 \leq \frac{9}{\pi} L_j^2. 
\end{equation}
However this is contradiction with inequality~\eqref{E3}
(by the setting $\kappa_2=\lceil\frac{9}{\pi}\rceil=3$). 
This means that the iterative process cannot reach step $j$.

We conclude that for some $0 \leq i \leq j-1$, step $i$ is successful:
we found a system of $k^2$ disjoint squares of side $x$ 
with first vertices $a_{i_1,i_2} =(t_1+i_1 s, t_2+i_2 s)x$, 
where $i_1,i_2 \in \{0,1,\ldots,k-1\}$,
each containing a distinct point, say $b_{i_1,i_2} \in S$, 
for $i_1,i_2 \in \{0,1,\ldots,k-1\}$.
Observe that the $k^2$ points $a_{i_1,i_2}$ form an (exact) 
grid $Q'$ of $k^2$ points with side length equal to $sx$. 
As in the one-dimensional case, it is now easy to verify that the
$k^2$ points $b_{i_1,i_2}$ form an $\eps$-approximate grid of $k^2$ points,
since for $i_1,i_2 \in \{0,1,\ldots,k-1\}$
\begin{equation} \label{E23}
\eps s \geq \sqrt2, \ \ {\rm thus } \ \ d(a_{i_1,i_2},b_{i_1,i_2})
\leq x \sqrt2 \leq \eps x s. 
\end{equation}
Note that the minimum distance among the points in $Q'$ is $sx$, and
this completes the proof for the planar case ($d=2$).

The argument for the general case $d \geq 3$ is analogous and the
calculations in deriving the upper bound are as follows. 
The inequality~\eqref{E2} remains valid. By the choice of parameters
$r$ and $j$, we have
\begin{equation} \label{E20}
c \cdot \delta^d \cdot \left (\frac{k^d}{k^d-1} \right)^j \geq \kappa_d.
\end{equation}
The analogue of~\eqref{E3} is
\begin{equation} \label{E18}
m_j \cdot \delta^d \geq \frac{c L^d \cdot \delta^d} {(k^d -1)^j s^{dj}} =
c \cdot \delta^d \cdot \left(\frac{k^d}{k^d-1} \right)^j \frac{L^d}{(ks)^{dj}} 
\geq \kappa_d \cdot \frac{L^d}{(ks)^{dj}}= 
\kappa_d \cdot \left(\frac{L}{(ks)^j}\right)^d= \kappa_d \cdot L_j^d. 
\end{equation}
The packing argument in $\RR^d$ yields
\begin{equation} \label{E19}
m_j \cdot \vol_d \left(\frac{\delta}{2}\right) \leq
\left(\frac{3}{2}\right)^d L_j^d,
\end{equation}
where $\vol_d(r)$ is the volume of the sphere of radius $r$ in
$\RR^d$. It is well-known that
\begin{equation} \label{E21}
\vol_d(r)= \begin{cases}
\dfrac{\pi^{d/2}}{(d/2)!} \cdot r^d & {\rm if \ } d \ {\rm is \ even},
\medskip \\
\dfrac{2 \cdot (2\pi)^{(d-1)/2}}{1 \cdot 3 \cdots d} \cdot r^d & {\rm if \ } 
d \ {\rm is \ odd}.
\end{cases}
\end{equation}
To obtain a contradiction in the argument, as in the previous cases,
one sets $\kappa_d$ as in~\eqref{E22} taking into account
\eqref{E21}. The setting of $s$ is such that the analogue of
\eqref{E23} is ensured. This completes the proof of Theorem~\ref{T2}. 
\end{proof}

\medskip
By selecting a sufficiently fine grid in Theorem~\ref{T2}, one obtains
by similar means the following general statement for any pattern in $\RR^d$:

\begin{theorem} \label{T3}
For every positive integer $d$, finite pattern $P \subset \RR^d$, $|P|=k$, and
$c,\delta>0$, and $0<\eps\leq 1/3$,  
there exists a positive number $Z_0=Z_0(d,P,c,\delta,\eps)$  with the
following property:  Let $S$ be a $\delta$-separated point set in the
$d$-dimensional cube $Q=[0,L]^d$, with at least $c L^d$ points, where $L \geq Z_0$.
Then $S$ contains a subset that is an $\eps$-approximate homothetic
copy of $P$.
\end{theorem}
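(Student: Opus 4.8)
The plan is to reduce the problem to Theorem~\ref{T2} by observing that any finite pattern $P$ can be approximated to arbitrary precision by a sub-pattern of a sufficiently fine grid. First I would fix $P=\{p_1,\ldots,p_k\}\subset\RR^d$ and rescale so that $P$ lies in the unit cube $[0,1]^d$ with minimum pairwise distance $\mu=\min_{i\neq j}d(p_i,p_j)>0$. Choose an integer parameter $N$ (to be specified in terms of $\eps$, $k$, and the geometry of $P$) and overlay the grid $\frac1N\{0,1,\ldots,N\}^d$ on $[0,1]^d$. For each $p_i$, let $g_i$ be the nearest grid point; then $d(p_i,g_i)\le \tfrac{\sqrt d}{2N}$. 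After clearing the (finitely many) cases where two pattern points might round to the same grid point — avoidable by taking $N$ large relative to $\mu$ — the points $g_1,\ldots,g_k$ are a $k$-point subset of an $N'$-grid for $N'=N+1$, sitting inside the grid $\{0,\ldots,N-1\}^d$ scaled by $1/N$. The key inequality is that a homothetic copy of the full $N'$-grid at scale factor $\lambda$ has minimum interpoint distance $\lambda/N$, so the rounding error $\tfrac{\sqrt d}{2N}\cdot\lambda$ of each $g_i$ is a $\tfrac{\sqrt d}{2}$ fraction of that minimum distance; combined with the $\eps'$-approximation from Theorem~\ref{T2} this gives an overall approximation of $P$.

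The key steps, in order, would be: (1) normalize $P$ into $[0,1]^d$ and record $\mu$; (2) pick $N=N(d,P,\eps)$ large enough that (a) distinct pattern points get distinct nearest grid points and (b) $\tfrac{\sqrt d}{2N}$ plus the downstream grid-approximation error is at most $\eps$ — concretely one wants something like $\eps' \le \eps/2$ and $\tfrac{\sqrt d}{2N}\le \eps/2$, so $N\ge \sqrt d/\eps$ suffices for (b) and $N > \sqrt d/\mu$ for (a); (3) apply Theorem~\ref{T2} with dimension $d$, grid parameter $k'=N'=N+1$, the same $c$ and $\delta$, and approximation parameter $\eps'$, obtaining $Z_0(d,P,c,\delta,\eps):=Z_0(d,N+1,c,\delta,\eps')$ and, for $L\ge Z_0$, a subset $T\subseteq S$ forming an $\eps'$-approximate $(N+1)$-grid; (4) inside the exact grid $Q'$ underlying $T$, select the $k$ points $g_1',\ldots,g_k'$ corresponding to $g_1,\ldots,g_k$ under the homothety, and let $Q''$ be the corresponding homothetic copy of $P$; (5) verify by the triangle inequality that the $k$ points of $T$ matched to $g_1',\ldots,g_k'$ lie within $\eps\cdot d''$ of the points of $Q''$, where $d''$ is the minimum pairwise distance in $Q''$, using that $d''\ge \mu\cdot(\text{grid scale})$ and the minimum grid distance is $(\text{grid scale})$, so the two error contributions (grid-point rounding, and the $\eps'$-approximation of the grid) are each a controlled fraction of $d''$.

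The main obstacle I expect is step (5): the $\eps$-approximation in Definition is measured relative to the \emph{minimum pairwise distance $d''$ of the target copy $Q''$ of $P$}, not relative to the grid spacing, and these differ by the factor $\mu\le 1$ (and $\mu$ can be tiny for a degenerate-looking pattern). So the rounding tolerance $\tfrac{\sqrt d}{2N}$, which is naturally a fraction of the grid spacing, must be converted into a fraction of $d''$, costing a factor $1/\mu$; this forces $N$ to be chosen as roughly $\sqrt d/(\eps\mu)$ rather than $\sqrt d/\eps$. One must also check the requirement $\eps'\le 1/3$ needed to invoke Theorem~\ref{T2}, which is automatic since $\eps'\le\eps\le1/3$, and confirm that the finitely many ``collision'' configurations in (2)(a) are genuinely avoided — a counting argument on the grid, or simply a perturbation of $P$ within its similarity class, handles this. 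Everything else is bookkeeping: substituting the value of $\eps'$ and $N$ into the formula for $Z_0$ from Theorem~\ref{T2} yields an explicit (if unenlightening) bound $Z_0(d,P,c,\delta,\eps)$.
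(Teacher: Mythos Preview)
Your approach is exactly what the paper intends: the paper's entire ``proof'' of Theorem~\ref{T3} is the one-line remark preceding the statement, that ``by selecting a sufficiently fine grid in Theorem~\ref{T2}, one obtains by similar means'' the result for arbitrary patterns. Your proposal fleshes this out correctly, and you have correctly identified the one nontrivial bookkeeping point, namely that the grid fineness $N$ must scale like $\sqrt d/(\eps\mu)$ rather than $\sqrt d/\eps$ because the approximation error must be measured against the minimum distance $d''$ in the homothetic copy of $P$, not against the grid spacing.
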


Observe that the iterative procedures used in the proofs of Theorems
\ref{T1},~\ref{T2} and~\ref{T3}, yield very simple and efficient 
algorithms for computing the respective approximate homothetic copies
given input point sets satisfying the imposed requirements.
For instance in Theorems~\ref{T1} and~\ref{T2}, the number of
iterations, $j$, is given by~\eqref{E11} and resp.~\eqref{E12}, 
and each iteration takes linear time (in the number of points).

\vspace{-0.5\baselineskip}
\paragraph {Remark.} The following connection between our result
Theorem~\ref{T1} and Szemer\'{e}di's Theorem~\ref{T-S} is worth 
making. If one makes abstraction of the bounds obtained, 
the qualitative statement in Theorem~\ref{T1} can be obtained as a
corollary from Theorem~\ref{T-S}. Here is a proof.
For simplicity let $n=L$ be integer. 
Take any set of $cn$ points. Since the set is $\delta$-separated every
interval $[i,i+1]$ has at most $1/\delta$ points. 
Therefore, there are at least $c\delta n$ intervals with at least one
point. By Theorem~\ref{T-S}, we know that if $n$ is large enough then
we can find $k/\eps$ intervals which form an arithmetic progression of
length $k/\eps$ (just think of each interval $[i,i+1]$ as the integer
$i$). To be more precise, if Theorem~\ref{T-S} works for 
$n \geq N_0(k,c)$ then we apply it with $N_0(k/\eps,c\delta)$. Let
$i_0,\ldots,i_{k/\eps}$ be the intervals of this arithmetic
progression. Then, by definition each of these $k/\eps$ intervals
has a point from the set. Pick an arbitrary element of the set from
the $k$ intervals $i_0, i_{1/\eps}, i_{2/\eps},\ldots,i_{k/\eps}$. 
Then we get an $\eps$-approximate $k$-term arithmetic progression
since the distance between these intervals is at least $1/\eps$, so
the error from picking an arbitrary point in each interval is at most
$\eps$ relative to the distance between the points.  
 
It is also worth noting that our proof of Theorem~\ref{T1} 
is self contained and much simpler (from first principles) than 
the proof one gets from Szemer\'{e}di's theorem as described above. 
Moreover, the upper bound resulting from our proof is much better
than that one gets from the integer theorem. 
That is, with the quantitative bounds included, the two theorems
(\ref{T1} and~\ref{T2}) cannot be derived as corollaries of the
classical integer theorems. Indeed, as mentioned in the introduction no
combinatorial proof is known for the higher dimensional generalization
of Szemer\'{e}di's theorem due to F\"urstenberg and Katznelson.

\section{Almost collinear points} \label {S-collinear}

Let $0 <\eps<1$, and let $S$ be a finite point set in $\RR^d$.
$S$ is said to be $\eps$-{\em collinear}, if in every triangle
determined by $S$, two of its (interior) angles are at most $\eps$. 
Note that in particular, this condition implies that an
$\eps$-collinear point set is contained in a section of a cylinder
whose axis is a diameter pair of the point set, and with radius $\eps D$, 
where $D$ is the diameter; the cylinder radius is at most
$\frac{D}{2} \tan \eps \leq \eps D$, for $\eps<1$.

\begin{theorem} \label{T4} 
For any dimension $d$, positive integer $k$, and $\eps>0$, there exists
$N=N(d,k,\eps)$, such that any point set $S$ in $\RR^d$ with at least $N$ points
has a subset of $k$ points that is $\eps$-collinear.
\end{theorem}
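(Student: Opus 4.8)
The plan is to prove Theorem~\ref{T4} by a double application of Ramsey's theorem (Theorem~\ref{T-R}), coloring triples of points according to the coarse ``shape'' of the triangle they determine. The key observation is that $\eps$-collinearity is a property one can try to force on \emph{every} triple of a large subset, which is exactly the situation Ramsey's theorem is designed for: if we can $r$-color the $3$-element subsets of $S$ so that one particular color class consists only of ``flat'' triangles, then a monochromatic subset of that color is $\eps$-collinear.

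Concretely, I would first fix a finite collection of ``angle types.'' Partition the interval of possible largest angles $[\pi/3,\pi]$ (every triangle has a largest angle in this range) into finitely many subintervals of length at most $\eps$; say there are $M=M(\eps)$ of them. Color each triple $\{p,q,s\}\subseteq S$ by the index of the subinterval containing its largest angle. Apply Theorem~\ref{T-R} with $p=3$, $r=M$, and $q$ chosen large enough (to be determined by a second Ramsey application below) to obtain a subset $Y\subseteq S$, $|Y|=q$, in which every triangle has its largest angle in a single fixed subinterval $J$ of length $\le\eps$. There are now two cases. If $J\subseteq[\pi-\eps,\pi]$, then in every triangle of $Y$ the largest angle is $\ge\pi-\eps$, so the other two angles sum to $\le\eps$ and are each $\le\eps$; thus $Y$ itself is $\eps$-collinear, and we are done provided $q\ge k$. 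The remaining, and harder, case is when $J$ is bounded away from $\pi$: then every triangle in $Y$ has all angles bounded below by some $\alpha=\alpha(\eps)>0$, i.e.\ $Y$ is in ``$\alpha$-general position'' (no three points nearly collinear). I must show such a $Y$ cannot be too large, which caps $q$ and hence forces the first case.

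To bound the size of an $\alpha$-general-position set, I would use a second, orthogonal Ramsey/combinatorial-geometry argument. One clean route: in a set $Y$ with all triangle angles $\ge\alpha$, take a diameter pair $u,v$ of $Y$. For every other point $w$, the angle of the triangle $uvw$ at $w$ is at least $\alpha$, while the sides $uw$ and $vw$ are at most $d(u,v)$; an elementary estimate then shows $w$ lies within bounded distance of the segment $uv$ and in fact the points $w$ are ``spread out'' along the direction of $uv$ by a factor controlled by $\alpha$ — more carefully, projecting onto the line through $u,v$ and using that each triangle $uvw$, $uvw'$ also has all angles $\ge\alpha$, one derives that consecutive projections differ by a definite amount, bounding the number of points by a function of $\alpha$ only. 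Alternatively, and perhaps more robustly, color pairs of points of $Y$ by the ``orientation bucket'' of the line through them (direction discretized to precision depending on $\alpha$) and apply Ramsey again: a monochromatic subset has all its connecting lines nearly parallel, which combined with the angle lower bound $\alpha$ quickly forces a bounded size. Either way I get $|Y|\le f(\eps)$ for an explicit $f$.

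Putting it together: choose $q=\max\{k,\,f(\eps)+1\}$ in the first Ramsey application, set $N=N(3,q,M(\eps))$ from Theorem~\ref{T-R}, and the argument above shows the second case is impossible, so the monochromatic $Y$ falls into the first case and is $\eps$-collinear with $|Y|=q\ge k$; taking any $k$ of its points finishes the proof. I expect the main obstacle to be the second step — cleanly bounding the size of a set in $\alpha$-general position — since the naive angle estimates are a little delicate near the diameter endpoints; the safest implementation is the ``discretize the line directions and re-apply Ramsey'' trick, which reduces that step to another mechanical appeal to Theorem~\ref{T-R} rather than a hands-on packing computation. Note also that, as the statement advertises, no separation hypothesis on $S$ is needed anywhere: the whole argument is scale- and translation-invariant because it refers only to angles and to directions of lines, never to absolute distances.
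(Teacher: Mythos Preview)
Your plan can be made to work, but it is a detour compared with the paper's argument, and one intermediate claim is wrong as stated. The paper colours \emph{pairs} of points by the discretised direction of the connecting segment (about $\lceil\pi/\eps\rceil+1$ colours) and applies Ramsey's theorem once with $p=2$: in a monochromatic $k$-set all segments lie within $\eps$ of a common direction, so in any triangle $pqr$ with $x(p)<x(q)<x(r)$ the angles at $p$ and at $r$ are each at most $\eps$. That is the entire proof.

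Your route instead colours triples by the bucket of the largest angle, applies Ramsey with $p=3$, and in Case~2 must bound the size of a set $Y$ all of whose triangles have largest angle in some $J$ with $\sup J<\pi$. Your assertion that then ``every triangle in $Y$ has all angles bounded below by some $\alpha>0$'' is false: a triangle with angles $\pi-2\eps,\ \eps+\eta,\ \eps-\eta$ has its largest angle bounded away from $\pi$ while its smallest angle is arbitrarily close to $0$. What you actually obtain is only that the \emph{largest} angle is bounded away from $\pi$. Fortunately that is exactly what your ``safest implementation'' (discretise line directions, Ramsey on pairs, get a monochromatic triple which is then too obtuse for $J$) requires, so Case~2 can indeed be closed that way. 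But notice that this sub-argument \emph{is} the paper's proof, used here merely to derive a contradiction inside Case~2 rather than to finish the problem directly. Once the erroneous $\alpha$-lower-bound claim is dropped, your plan amounts to: run an unnecessary Ramsey step on triples, and then in the only nontrivial case run the paper's direct argument anyway. Colouring pairs by direction from the outset is both shorter and yields a far better bound on $N$.
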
 
\begin{proof}
For simplicity, we present the proof for $d=2$; the argument for $d \geq 3$ 
is analogous.
Finitely color all the segments determined by $S$ as follows.
Choose a coordinate system, so that no two points have the same
$x$-coordinate. Put $r=\lceil \pi/\eps \rceil +1$, and let $\I$ be a 
uniform subdivision of the interval $[-\pi/2,\pi/2]$ into $r$
half-closed subintervals of length at most $\eps$. 

Let $pq$ be any segment, where $x(p) < x(q)$. Color $pq$ by $i$ if the
angle made by $pq$ with the $x$-axis belongs to the $i$th subinterval. 
Obviously this is an $r$-coloring of the segments determined by $S$.
Let $N=N(2,k,r)$, where $N(\cdot)$ is as in Theorem~\ref{T-R}. 
By Ramsey's theorem (Theorem~\ref{T-R}), for every $r$-coloring of the
segments of an $N$-element point set, there exists 
a monochromatic set $K$ of $k$ points, that is, all segments have the same
color, say $i$. Let $\Delta{pqr}$ be any triangle determined by $K$,
and assume that $x(p) < x(q) < x(r)$. Then by construction, we have 
$\angle{qpr}, \angle{prq} \leq \eps$.
This means that $K$ is $\eps$-collinear, as required..
\end{proof}

\smallskip
We conclude with an informal remark. 
Observe that the limit of an $\eps$-collinear set, when $\eps \rightarrow 0$, 
is a collinear set of points. It should be noted that one cannot hope
to find any other {\em non-collinear} pattern which is the limit
of some approximate patterns occurring in any sufficiently large point
set, no matter how large. 
Indeed, by taking all points in our ground set on a common
line, all its subsets will be collinear. 

\paragraph{Acknowledgments.}
The author thanks the anonymous reviewer of an earlier version
for the observation and the proof in the remark at the end of
Section~\ref{S-approximate}.

\end{document}